\documentclass[11pt,reqno]{amsart}
\usepackage[a4paper,margin=27mm]{geometry}
\usepackage{amsmath,amssymb,amsthm,mathtools}
\numberwithin{equation}{section}
\usepackage[hidelinks]{hyperref}
\newcommand{\R}{\mathbb R}
\newcommand{\Sph}{\mathbb S}
\newcommand{\cH}{\mathcal H}
\newcommand{\cB}{\mathcal B}
\newcommand{\cC}{\mathcal C}
\newcommand{\eps}{\varepsilon}
\newcommand{\supp}{\operatorname{supp}}
\newcommand{\Lip}{\operatorname{Lip}}

\theoremstyle{plain}
\newtheorem{theorem}{Theorem}[section]
\newtheorem{lemma}[theorem]{Lemma}
\newtheorem{corollary}[theorem]{Corollary}
\theoremstyle{remark}

\title[Homogeneous polynomial approximation]{Homogeneous polynomial approximation on symmetric convex bodies: A proof of Kro\'o's conjecture}
\author{Matthew J. Colbrook}
\address{Department of Applied Mathematics and Theoretical Physics,
University of Cambridge, Centre for Mathematical Sciences,
Wilberforce Road, Cambridge CB3 0WA, United Kingdom}
\email{m.colbrook@damtp.cam.ac.uk}
\subjclass[2020]{41A10, 41A63}
\keywords{Homogeneous polynomials, uniform approximation, convex bodies,
bandlimited functions}
\date{October 24, 2026}

\begin{document}
\begin{abstract}
We resolve Kroó's 2004 conjecture on homogeneous polynomial approximation in every dimension, without smoothness or curvature assumptions. For every origin-symmetric convex body \(K\subset\R^d\) with nonempty interior, every continuous real function on \(\partial K\) is uniformly approximated by sums of homogeneous polynomials of degrees \(n\) and \(n+1\) as \(n\to\infty\). Moreover, approximation of the constant function by homogeneous polynomials of degree \(2n\) is uniform over all such bodies in each fixed dimension. Via homogenization, we also resolve Kro\'o's 2019 weighted approximation conjecture for every convex admissible weight on Euclidean space. The key construction produces homogeneous polynomials with a common Gaussian boundary profile and localization errors controlled by arbitrary polynomial weights. Its analytic core is a weighted Gaussian approximation theorem: for \(d\ge2\), sufficiently broad Gaussians on \(\mathbb R^{d-1}\) can be approximated by \(e^{-f}F\), where \(f\) is nonnegative, convex and Lipschitz with \(f(0)=0\), and \(F\) is a Schwartz function with Fourier support in an arbitrarily small ball about the origin. Small Fourier support allows Taylor truncation to a lower degree, while the remainder enforces decay through homogenization. We then use positive spherical quadrature and degree raising to yield the full approximation theorem.
\end{abstract}

\maketitle

\section{Introduction}

The classical Weierstrass theorem permits uniform approximation by polynomials whose terms may have many different degrees. We study the corresponding problem when each approximant is restricted to two consecutive homogeneous degrees. Homogeneity imposes a parity restriction: polynomials of even degree are even, and those of odd degree are odd. Kro\'o's conjecture asserts that two consecutive degrees suffice on the boundary of every origin-symmetric convex body.

The Euclidean norm and inner product are denoted by \(|\cdot|\) and
\(\cdot\), respectively, and \(\Sph^{d-1}=\{x\in\R^d:|x|=1\}\).
For an integer \(N\geq0\), let \(\cH_N^d\) be the real vector space
spanned by monomials of total degree \(N\) on \(\R^d\) (\(\cH_0^d\) is the space of constants). For a norm \(\nu\), define
\[
K_\nu=\{x\in\R^d:\nu(x)\leq1\},
\qquad
x_\nu(u)=\frac{u}{\nu(u)}\quad(u\in\Sph^{d-1}).
\]
Every origin-symmetric convex body with nonempty interior is the unit ball of
its Minkowski norm
\[
\nu_K(x)=\inf\{t>0:x\in tK\}.
\]
The notation \(C(E)\) denotes the real continuous
functions on a compact set \(E\), with the supremum norm. In the local
construction, \(s=d-1\geq1\).
For \(u,v\in\Sph^s\), define
\[
\rho(u,v)=\min\{|u-v|,|u+v|\}.
\]
The following theorem proves the conjecture and a uniform version for
bounded norm classes.

\begin{theorem}[Homogeneous polynomial approximation]\label{thm:main}
For every origin-symmetric convex body \(K\subset\R^d\) with nonempty
interior, there are \(h_{2n}\in\cH_{2n}^d\) such that
\[
\|h_{2n}-1\|_{C(\partial K)}\rightarrow0.
\]
Moreover, for fixed \(d\) and \(0<\alpha\leq\beta<\infty\),
\[
\lim_{n\to\infty}
\sup_{\substack{\nu\ {\rm a\ norm}\\
\alpha|x|\leq\nu(x)\leq\beta|x|\ \forall x}}
\inf_{h\in\cH_{2n}^d}
\|h-1\|_{C(\partial K_\nu)}=0.
\]
Consequently, for every \(f\in C(\partial K)\),
\[
\lim_{n\to\infty}\inf_{p\in\cH_n^d+\cH_{n+1}^d}
\|f-p\|_{C(\partial K)}=0.
\]
If \(f\) is even(odd), the approximants may be chosen in \(\cH_{2n}^d\) (\(\cH_{2n+1}^d\)).
\end{theorem}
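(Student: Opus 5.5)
We will follow the route indicated in the abstract. The central step is to prove the first assertion in its uniform form: for each fixed \(d\) and \(0<\alpha\le\beta\), there are \(h_{2n}\in\cH_{2n}^d\) with \(\sup_\nu\|h_{2n}-1\|_{C(\partial K_\nu)}\to0\). We then deduce the two-degree statement by a degree-raising and density argument.

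\emph{Step 1 (local bumps).} Fix a direction \(u_0\in\Sph^{s}\) and a supporting hyperplane of \(K_\nu\) at \(x_\nu(u_0)\). In coordinates \(x=(t,y)\), with \(t\) normal to the hyperplane and \(y\in\R^{s}\), write the boundary near \(u_0\) as a graph \(t=1-f(y)\) in the normalized homogeneous chart \(y\mapsto\) (point \(x/t\)). Here \(f\ge0\) is convex and Lipschitz with \(f(0)=0\), and its Lipschitz constant is controlled by \(\beta/\alpha\). We seek \(g\in\cH_{2n}^d\) whose restriction to \(\partial K_\nu\) looks like a Gaussian of width \(\sim n^{-1/2}\) centred at \(\pm x_\nu(u_0)\). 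The natural candidate is \(g(t,y)=t^{2n}P(y/t)\), where \(P\) is a polynomial of degree \(\le 2n\). On the boundary, \(t^{2n}\approx e^{-2nf}\) after rescaling \(y\mapsto y/\sqrt n\). So we need \(e^{-f}F\) to approximate a broad Gaussian with \(\hat F\) supported in a small ball, which is precisely the weighted Gaussian approximation theorem stated earlier. The small Fourier support of \(F\) makes its Taylor polynomial of degree \(\le\delta n\) accurate on balls of radius \(\sim\sqrt n\) after rescaling. The remaining degree is used by the factor \(t^{2n}\), which decays away from \(u_0\) because \(t<1\) there. This yields a common Gaussian profile, with tails bounded by \(C_M\,(1+\sqrt n\,\rho(u,u_0))^{-M}\) for every \(M\), uniformly over the norm class. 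Evenness handles \(\pm u_0\) simultaneously.

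\emph{Step 2 (summation via quadrature).} Take a positive quadrature rule on \(\Sph^{s}\) with nodes \(u_j\) at spacing \(\sim n^{-1/2}\) that integrates the relevant kernels accurately. Set \(h=\sum_j w_j g_{u_j}\), normalized so that the Gaussian profiles sum to a Riemann sum of a Gaussian integral, which is nearly constant. The polynomial tail bounds with \(M>s\) control the interactions between distant nodes. Positivity of the weights keeps the errors from cancelling badly. This gives \(\|h-1\|\to0\) uniformly in \(\nu\).

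\emph{Step 3 (from constants to all functions).} Given \(f\in C(\partial K)\), split it into even and odd parts. For even \(f\), the function \(f(x/\nu(x))\nu(x)^{0}\) on the sphere is approximated by an even polynomial \(q\) of degree \(2m\), which we may take homogeneous: on \(\partial K\) we can multiply lower-degree even terms by \(\nu\)-free powers of \(|x|^2\), or better, by \(h_{2k}\approx1\). Then \(q\cdot h_{2n-2m}\in\cH_{2n}^d\) approximates \(f\) on \(\partial K\), since \(h_{2n-2m}\to1\) uniformly there. In detail, approximate \(f\) on \(\partial K\) by a polynomial \(p=\sum_{k\le 2m}p_k\) with even homogeneous parts, and replace \(p_k\) by \(p_k h_{2n-k}\). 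For odd \(f\) we use \(x_1\) times an even approximant, or the odd parts times \(h_{2n-k}\). Combining the two parts gives approximants in \(\cH_n^d+\cH_{n+1}^d\), depending on the parity of \(n\).

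The main obstacle is Step 1: achieving uniform localization for a merely Lipschitz convex \(f\) with no curvature assumption. Boundary flat pieces make the decay of \(t^{2n}\) weak, so the decay must come from the polynomial-weighted error bounds of the weighted Gaussian theorem. We take that theorem as given and focus on the uniform bookkeeping.
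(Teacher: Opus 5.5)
There is a genuine gap, and it is in Step~1, in exactly the part you set aside as ``uniform bookkeeping'': $n^{-1/2}$ is the curvature scale, and it is the wrong scale for nonsmooth bodies. Steps~2 and~3 are essentially the paper's. For odd $f$, your second option (odd homogeneous parts $p_{2j+1}$ times $h_{2(n-j)}$) works; dividing by $x_1$ or padding with powers of $|x|^2$ does not, since $f/x_1$ is discontinuous and $|x|\neq1$ on $\partial K$.

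In the supporting-hyperplane chart, $\ell(z)=(1+\psi(t))^{-1}$ with $\psi(t)=\nu(v+Bt)/a_0-1$. Here $\psi$ is convex and nonnegative, $\psi(0)=0$, and $\Lip(\psi)\leq D(1+D)$ with $D=\beta/\alpha$; nothing better holds in general. At a vertex of a polytope, with $\ell$ supporting only there, $\psi(t)\geq c|t|$ near $0$. After your substitution $t=\eta/\sqrt n$, the weight to be compensated at degree $2n$ is $2n\psi(\eta/\sqrt n)\geq 2c\sqrt n|\eta|$. Its Lipschitz constant grows like $\sqrt n$, and two things then fail.

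First, $\ell(z)^{2n}$ is not close to $e^{-2n\psi}$. The ratio lies between $1$ and $e^{n\psi^2}$, and $n\psi(\eta/\sqrt n)^2$ is of order $|\eta|^2$ exactly where the Gaussian lives.

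Second, and more seriously, nothing controls the approximants. The weighted Gaussian theorem is a Hahn--Banach existence result: it bounds $\|w_A(e^{-f}F-G_A)\|_\infty$ uniformly in $f$, but says nothing about $\|\widehat F\|_1$. At your scale $F$ must reach size at least of order $e^{cA\sqrt n}$ to undo the spike, and $F$ changes with $n$ even for one fixed polytope. Yet the Taylor remainder and the bound near the supporting hyperplane are both proportional to $\|\widehat F\|_1$; near the hyperplane you only have $|\ell(z)|^{2n-m}$ against the size of the Taylor coefficients. So neither the localization nor its uniformity follows.

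The missing idea is to work at the scale $N^{-1}$ and add a compactness step. Set $x=Nt$ and $f_N(x)=N\psi(x/N)$ as in \eqref{eq:fn}. Then every $f_N$ lies in the fixed class $\cC_C$, $C=D(1+D)$, for all $\nu$, $v$ and $N$. Moreover $(1+f_N/N)^{-N}=e^{-f_N}r_N$ with $1\leq r_N\leq e^{f_N^2/(2N)}$, and this tends to $1$ uniformly on $|x|\leq M$.

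Since $\cC_C$ is compact for locally uniform convergence, and the weighted error is continuous in $f$ there, Lemma~\ref{lem:finite} selects every approximant from a single finite family $F_1,\dots,F_J$. This family is fixed before the norm, centre and degree are chosen. That yields the uniform constant $C_\phi=\max_j\|\phi_j\|_1$, which drives both the Taylor bound $C_\phi e^{-N}$ at degree $N/2$ on $|t|\leq2D$ and the bound $C_\phi e^{-N/4}$ on $|\ell(z)|\leq1/2$.

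The resulting profile has angular width $A/N$, so the quadrature uses $\lambda=N/A$. For a smooth body $f_N\to0$ locally, and the Gaussian shape is produced by the bandlimited factor rather than by curvature. With Step~1 rebuilt this way, your Steps~2 and~3 finish the proof as in the paper.
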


The following corollary follows from the uniform assertion by a linear change of
coordinates. It shows that the degree needed to approximate the constant function
to a fixed accuracy depends only on the dimension.

\begin{corollary}[Uniformity over convex bodies]\label{cor:allbodies}
Let \(\mathcal K_d\) be the class of origin-symmetric convex bodies in
\(\R^d\) with nonempty interior. Then
\[
\lim_{n\to\infty}\sup_{K\in\mathcal K_d}
\inf_{h\in\cH_{2n}^d}\|h-1\|_{C(\partial K)}=0.
\]
\end{corollary}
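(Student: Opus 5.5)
The plan is to reduce to the uniform assertion of Theorem~\ref{thm:main} with a fixed pair, say \(\alpha=1\) and \(\beta=d\), using John's ellipsoid theorem for symmetric bodies together with invariance under linear maps. Let \(K\in\mathcal K_d\). By John's theorem there is an ellipsoid \(E\), centred at the origin, with
\[
E\subseteq K\subseteq \sqrt d\,E .
\]
Pick \(T\in GL(d,\R)\) with \(T(E)\) equal to the closed Euclidean unit ball \(B\). Then \(B\subseteq T(K)\subseteq\sqrt d\,B\). Hence the Minkowski norm \(\nu=\nu_{T(K)}\) satisfies
\[
d^{-1/2}|x|\le\nu(x)\le|x| \qquad\text{for all } x .
\]
So \(\nu\) lies in the class of Theorem~\ref{thm:main} with \(\alpha=d^{-1/2}\) and \(\beta=1\), and \(K_\nu=T(K)\).

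Next, the error is invariant under the linear change of variables. The space \(\cH_{2n}^d\) is preserved by composition with \(T\): if \(h\in\cH_{2n}^d\), then \(h\circ T\in\cH_{2n}^d\), and \(T\) is a bijection of the spaces. Since \(T\) is linear, it maps \(\partial K\) homeomorphically onto \(\partial T(K)\). It also fixes the constant function \(1\). Therefore
\[
\inf_{h\in\cH_{2n}^d}\|h-1\|_{C(\partial K)}
=\inf_{h\in\cH_{2n}^d}\|h\circ T^{-1}-1\|_{C(\partial T(K))}
=\inf_{g\in\cH_{2n}^d}\|g-1\|_{C(\partial K_\nu)} .
\]

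Combining the two steps, for every \(K\in\mathcal K_d\) the left-hand quantity is at most
\[
\sup_{\nu:\ d^{-1/2}|x|\le\nu(x)\le|x|}\ \inf_{h\in\cH_{2n}^d}\|h-1\|_{C(\partial K_\nu)} .
\]
This bound is independent of \(K\), and it tends to \(0\) as \(n\to\infty\) by the uniform statement of Theorem~\ref{thm:main}. Taking the supremum over \(K\) gives the corollary.

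There is no real obstacle here. The only point needing care is to cite the symmetric form of John's theorem, with factor \(\sqrt d\). In fact any dimension-dependent factor would do, for example \(d\) from an Auerbach basis argument, because Theorem~\ref{thm:main} allows arbitrary fixed \(0<\alpha\le\beta\).
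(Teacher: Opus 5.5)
Your proof is correct and follows the paper's route. A linear change of variables places $K$ in a fixed norm class, composition with that linear map preserves $\cH_{2n}^d$ and the boundary error, and the uniform assertion of Theorem~\ref{thm:main} finishes the argument. The only difference is the normalization step: the paper avoids John's theorem and uses the elementary maximal-determinant (Auerbach-type) argument you mention at the end. That gives $\{y:\sum_i|y_i|\le1\}\subset E^{-1}K\subset[-1,1]^d$, hence $\alpha=d^{-1/2}$, $\beta=d^{1/2}$. Your opening ``say $\alpha=1$ and $\beta=d$'' does not match the pair $\alpha=d^{-1/2}$, $\beta=1$ you actually use, but this is harmless.
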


Theorem~\ref{thm:main} also resolves the weighted approximation conjecture
in \cite[Conjecture~2]{Kroo2019Weighted}.\footnote{In one variable, Totik and Varj\'u~\cite{TotikVarju2007} relate
approximation with varying weights to the local regularity of the
associated equilibrium measure.} For \(m\geq1\), a positive
continuous even weight \(w\) on \(\R^m\) is admissible if
\(t\,w(x/t)\) is nondecreasing for \(t>0\), for every fixed \(x\), and
converges as \(t\downarrow0\), uniformly for \(|x|=1\), to a positive
continuous function. For every convex admissible weight and every
continuous \(g:\R^m\to\R\) such that
\[
g(ru)\rightarrow g_\infty(u)\qquad(r\to\infty)
\]
uniformly for \(u\in\Sph^{m-1}\), with \(g_\infty\) continuous and even,
there are real polynomials \(p_{2n}\) of degree at most \(2n\) satisfying
\[
\sup_{x\in\R^m}
\left|w(x)^{-2n}p_{2n}(x)-g(x)\right|\rightarrow0.
\]
Indeed, the continuous extension of
\(w^*(x,t)=|t|w(x/t)\), \(t\ne0\), is a norm, and
\(K_w=\{w^*\leq1\}\) is an origin-symmetric convex body.
The function \(G(x,t)=g(x/t)\) extends continuously and evenly to
\(\partial K_w\), with \(G(x,0)=g_\infty(x/|x|)\).
Applying Theorem~\ref{thm:main} to \(G\) and taking
\(p_{2n}(x)=h_{2n}(x,1)\) gives the assertion by homogeneity.
This is the correspondence established in
\cite[Lemmas~1--2]{Kroo2019Weighted}; it removes the regularity assumption
in that paper's Theorem~1.
The results of this paper have also been formally verified in Lean \cite{palomar-2026-10-27-000004-v1}.

\subsection{Previous results}

Kro\'o formulated this conjecture in 2004; see
\cite{BenkoKroo2009} and \cite[Section~3 and Problem~1]{Kroo2023}.
Early results of Kro\'o and Szabados~\cite{KrooSzabados2005} included
the crosspolytope case. The planar case was established independently
by Benko and Kro\'o~\cite{BenkoKroo2009} and Varj\'u~\cite{Varju2007}.
Benko and Kro\'o also treated convex bodies with \(C^{1+\gamma}\)
boundary, \(\gamma>0\), while Varj\'u proved the result for polytopes
and for \(C^2\) boundaries with positive curvature. Kro\'o and
Szabados~\cite{KrooSzabados2008} obtained Jackson-type estimates under a
Dini--Lipschitz condition on the outer normal. They subsequently extended
the result to regular convex bodies, namely those with a unique supporting
hyperplane at every boundary point~\cite{KrooSzabados2009}. They also
proved the corresponding \(L^p\) approximation theorem on every symmetric
convex body, for \(1\leq p<\infty\). Totik~\cite{Totik2013} gave an
elementary proof of the planar and regular cases. Further cases,
including all origin-symmetric convex bodies of revolution in \(\R^3\),
were obtained by Kro\'o~\cite[Corollary~9]{Kroo2019Star}.
Kro\'o's survey~\cite{Kroo2023} describes these developments and the
general open problem.\footnote{A proof of the full conjecture was claimed
by Gonz\'alez Merino and Villa~\cite{GonzalezMerinoVilla2020}. The preprint
was withdrawn in July 2021; the withdrawal notice identifies its
Theorem~1.3 and Corollary~1.4 as incorrect.}
Theorem~\ref{thm:main} resolves this problem in every dimension.

Although smooth
bodies or polytopes can approximate every symmetric convex body, this does not by itself transfer homogeneous
approximation to the limit. A relative radial change of size \(\delta\)
multiplies the value of a degree-\(N\) homogeneous polynomial by
\((1+\delta)^N\). Thus the degree needed for approximation interacts
with the geometric error, and the known results for individual
approximating bodies do not supply the required uniform control.
Our argument treats the nonsmooth local profiles directly and gives
estimates uniform over normalized norm classes.

Barvinok~\cite{Barvinok2003} approximates arbitrary norms by roots of
nonnegative homogeneous polynomials. Ahmadi, de Klerk and
Hall~\cite{AhmadiDeKlerkHall2019} obtain approximation by roots of convex
positive definite forms and study optimisation over these polynomial
norms by semidefinite programming. Such root approximation does not by
itself control the polynomials on the unit boundary. By homogeneity, the approximants in Theorem~\ref{thm:main} give bounded
sublevel sets \(\{x:h_{2n}(x)\leq1\}\) for sufficiently large \(n\),
converging radially to \(K\). Lasserre~\cite{Lasserre2015} studies
minimum-volume containment in homogeneous polynomial sublevel sets,
extending the L\"owner--John ellipsoid problem.

Totik's construction~\cite[Section~2]{Totik2013} approximates the constant
function by summing localized homogeneous polynomials. Localization also
enters extremal problems. Kro\'o~\cite{Kroo2015} studied ordinary and
homogeneous needle polynomials, relating their decay to the boundary
geometry and applying them to norming sets and cubature formulas.
His later work~\cite{Kroo2026} constructs homogeneous needle polynomials
with small integral norms and applies them to Christoffel functions,
Markov inequalities, and discretization.

Localized polynomial kernels and positive quadrature also underlie the
construction of tight frames on spheres by Narcowich, Petrushev and
Ward~\cite{NarcowichPetrushevWard2006}.

Theorem~\ref{thm:kernel} supplies a common Gaussian profile on the
boundary, with a discrepancy controlled by arbitrary polynomial
weights at the localization scale. The estimates hold uniformly over
all centers and all convex bodies in a normalized norm class.
The integral of this prescribed profile against spherical measure
in the directional variable is independent of the center and the body.
This permits a common normalization when the local polynomials are
summed. The construction handles arbitrary nonsmooth convex profiles
through the weighted approximation and finite-family arguments of
Lemmas~\ref{lem:weighted} and~\ref{lem:finite}.

\subsection{The mechanism}

The proof of Theorem~\ref{thm:main} constructs localized polynomials of a prescribed degree.
Their restrictions to the boundary approximate Gaussians centered at an
antipodal pair. Polynomial decay of the error allows us to sum these
approximations without losing control as the degree increases.

\begin{theorem}[Uniform Gaussian approximation]\label{thm:kernel}
Fix \(s\geq1\), \(0<\alpha\leq\beta<\infty\), \(L\geq0\), and
\(\eps>0\). There are \(A\geq1\) and an integer \(N_0\geq2\), depending only on these
parameters, such that for every norm \(\nu\) satisfying
\[
\alpha|x|\leq\nu(x)\leq\beta|x|
\qquad(x\in\R^{s+1}),
\]
every \(v\in\Sph^s\), and every even \(N\geq N_0\), there is a real
\(R_{N,v}\in\cH_N^{s+1}\) for which
\[
\sup_{u\in\Sph^s}
\left(1+\frac{N}{A}\rho(u,v)\right)^L
\left|
R_{N,v}(x_\nu(u))
-\exp\left(-\frac{N^2}{A^2}\rho(u,v)^2\right)
\right|<\eps.
\]
\end{theorem}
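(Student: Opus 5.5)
The plan follows the mechanism outlined in the abstract: a Gaussian profile is manufactured as $e^{-f}F$, and the result is then homogenized. Fix $v$ and the norm $\nu$. Near $v$, parametrize the boundary $\partial K_\nu$ by the tangent variable $y\in\R^s$ along the supporting hyperplane at $x_\nu(v)$. Write points as $x=t(x_\nu(v)+y)$ with $y\in\R^s$ in a hyperplane complementary to $v$. Choose a linear functional $\ell$ with $\ell(x_\nu(v))=1$ that supports $K_\nu$, which is possible since $\nu$ is a norm. Then
\[
\nu(x_\nu(v)+y)=e^{g(y)}\bigl(1+\ell(y)\bigr)
\]
near $y=0$. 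Here $g\geq0$ and $g(0)=0$, and $g$ is Lipschitz with constants controlled by $\alpha,\beta$ after normalization. Convexity of $\nu$ gives a convex profile after the logarithm $f=\log\nu(x_\nu(v)+\cdot)$, up to the linear term. We seek $R_{N,v}(x)=\ell(x)^N P_N(x)$, or more precisely a homogeneous polynomial of degree $N$ that restricts on the ray through $x_\nu(v)+y$ to $\nu(\cdot)^{-N}$ times a function of $y$. On $\partial K_\nu$, a homogeneous degree-$N$ polynomial $Q$ satisfies $Q(x_\nu(u))=Q(x)/\nu(x)^N$. We therefore need $Q(x_\nu(v)+y)\approx \nu(x_\nu(v)+y)^N\exp(-N^2\rho^2/A^2)$. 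After the rescaling $y=z/N$, we have $\nu^{N}\approx e^{Nf(z/N)}$, and $Nf(z/N)=:f_N(z)$ is again convex, nonnegative and Lipschitz with the same constant.

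The analytic core is the weighted Gaussian approximation lemma (Lemma~\ref{lem:weighted}). Choose $A$ large so that the target Gaussian $e^{-|z|^2/A^2}$ is sufficiently broad. Approximate it by $e^{-f_N}F$ with $F$ Schwartz and $\widehat F$ supported in a ball of radius $\delta$. The approximation holds in the weighted norm $(1+|z|/A)^L$, which is exactly what the polynomial weight in the statement demands. Lemma~\ref{lem:finite} makes the constants uniform over the compact family of profiles $f$ arising from all $\nu$ in the class and all $v$. A compactness (Blaschke selection) argument on the normalized Lipschitz convex functions gives finitely many $F$'s that serve all profiles up to $\eps$.

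Next we take the degree and homogenize. Since $\widehat F$ is supported in $|\xi|\leq\delta$, $F$ is entire of exponential type $\delta$. Its Taylor polynomial of degree $M\approx C\delta N$ approximates $F(z)$ uniformly for $|z|\lesssim N$, with geometric error. Set $p(y)=T_MF(Ny)$, a polynomial of degree $\leq M<N$ in $y$. Define
\[
R_{N,v}(x)=\ell(x)^{N}\,p\bigl(y(x)\bigr),\qquad y(x)=\frac{\pi(x)}{\ell(x)},
\]
where $\pi$ is the projection onto the tangent hyperplane. This is homogeneous of degree $N$ because $\ell^{N-\deg p}$ clears the denominators. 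On the boundary its value is $\bigl((1+\ell(y))/\nu\bigr)^N p(y)=e^{-Nf(y)}p(y)$, which in the $z$ variable equals $e^{-f_N}T_MF$. Near $v$ this matches the Gaussian, with $|z|\approx N\rho(u,v)$ up to a bi-Lipschitz change absorbed into the profile.

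The main obstacle is the far region, $\rho(u,v)\gtrsim\eta$. There the Taylor truncation is not accurate, yet the weighted error must stay below $\eps(1+N\rho/A)^{-L}$. The plan is to use the decay from homogenization: there $\ell(x_\nu(u))\leq 1-c\eta$ fails only along the support face. Evenness of $N$ handles the antipode $-v$ by symmetry and explains the use of $\rho$. The factor $|\ell(x_\nu(u))|^{N}$ is at most $(\ell/\nu)^N$ times $|p|\leq e^{CM}$, which gives $e^{-Nc(\eta)+C\delta N}$. This is superpolynomially small once $\delta$ is chosen small relative to $c(\eta)$. The subtle part is nonsmooth faces of $K_\nu$, where $\ell\approx\nu$ on a large set. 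That is precisely why the convex weight $e^{-f}$ itself, rather than the factor $\ell^N$ alone, must provide the decay. Lemma~\ref{lem:weighted} must deliver this uniformly, since $f$ can vanish on a flat face.
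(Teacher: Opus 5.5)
Your overall architecture matches the paper's: Lemma~\ref{lem:weighted} with the finite family of Lemma~\ref{lem:finite}, Taylor truncation at a fraction of the degree, homogenization by a power of the supporting functional, and evenness for $-v$. However, two load-bearing steps fail as written.

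\textbf{The logarithmic profile is not convex.} The map $y\mapsto\log\nu(x_\nu(v)+y)$ is not convex. For the Euclidean norm on $\R^2$ with $v=e_1$ it equals $\tfrac12\log(1+y^2)$, which is concave for $|y|>1$, and such points lie in the chart where $\ell\geq1/2$. So Lemma~\ref{lem:weighted} cannot be applied to your $f_N$. Your exact boundary identity $R=e^{-f_N}T_MF$ is obtained only by giving up the hypothesis the lemma needs. The convex profile is $f_N(x)=N\bigl(\nu(v+Bx/N)/a_0-1\bigr)$, where $a_0=\nu(v)$ and $v+Bt$ parametrizes the supporting hyperplane $\{\ell=a_0\}$. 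With this choice the boundary factor is $a^N=(1+f_N/N)^{-N}$, which satisfies only $1\leq a^Ne^{f_N}\leq e^{f_N^2/(2N)}$. Hence $w_A|e^{-f_N}F_j-G_A|<\tau$ transfers to $a^NF_j$ only on a bounded window $|x|\leq M$, where $f_N\leq CM$. The rest of the sphere must be handled without the weighted lemma. On that window you also need the chart variable to agree with $N\rho(u,v)$ to first order. The paper arranges this with $t=a_0U^*z/\ell(z)$, the orthogonal coordinate in $v^\perp$. A bi-Lipschitz discrepancy cannot be ``absorbed into the profile'': $e^{-f}$ only multiplies $F$ and cannot reshape the fixed radial Gaussian in $N\rho$.

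\textbf{The far-region argument does not close.} Your bound $\ell(x_\nu(u))\leq1-c(\eta)$ for $\rho(u,v)\gtrsim\eta$ has $c(\eta)$ depending on $\nu$. The bandwidth $\delta$ therefore cannot be fixed before the norm, as Lemma~\ref{lem:finite} requires. On a flat face $c(\eta)=0$, $\ell=\nu$ and $f=0$, so neither $\ell^N$ nor $e^{-f}$ gives any decay. Appealing to Lemma~\ref{lem:weighted} there supplies no mechanism.

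The missing idea is to split by the value of $\ell$ rather than by $\rho$.
\begin{itemize}
\item On $|\ell(z)|\leq1/2$, take Taylor degree $m=N/2$ and bandwidth $\sigma_0=1/(64eD)$. Expanding the homogenization gives $|R_{N,j}(z)|\leq C_\phi2^{-N/2}e^{N\sigma_0D}\leq C_\phi e^{-N/4}$. The $2D$-Lipschitz bound on $u\mapsto\ell(u)/\nu(u)$ forces $\rho(u,v)\geq1/(4D)$, so the target is negligible there.
\item On $\ell(z)\geq1/2$, which contains every flat face, one has $|t|\leq2D$. The same bandwidth makes the Taylor truncation accurate to $C_\phi e^{-N}$ on this \emph{whole} chart, not only near $v$; this contradicts your remark that it is inaccurate in the far region. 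For $|x|>M$, use $a^N\leq1$ together with the uniform Schwartz decay of the finitely many $F_j$.
\end{itemize}
It is this tail decay of the finite family, not the weight $e^{-f}$, that controls flat faces uniformly over the norm class.
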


The analytic mechanism is independent of a convex body's geometry.
Lemma~\ref{lem:weighted} approximates broad Gaussians after multiplication
by \(e^{-f}\), for every nonnegative Lipschitz convex function \(f\)
vanishing at zero. Supporting inequalities provide one-sided exponential
decay in the directions spanned by gradients of \(f\). The Gaussian
Fourier cutoff handles directions where \(f\) is constant.
The error bound is independent of these directions and of the supporting
slopes. We then use compactness to obtain a finite family of approximants for
each uniformly Lipschitz class.

For a convex body, coordinates in a supporting hyperplane express the
radial factor through a function \(f_N\) that we approximate in this class. Small Fourier support permits
Taylor approximation using only half of the available degree and the
remaining degree gives decay through a power of the supporting
functional. The prescribed
Gaussian profile then allows the same spherical normalization at every
center. Positive quadrature combines the local approximations into
a polynomial approximating the constant function.

\subsection{Outline}

Section~\ref{sec:weights} proves the one-dimensional weighted density
lemma and its tensor-product form. Section~\ref{sec:convex} establishes
Gaussian approximation with convex weights and the selection of a finite
family. Section~\ref{sec:kernels} proves Theorem~\ref{thm:kernel}.
Section~\ref{sec:completion} completes the approximation theorem and the
uniform corollary.

\section{One-sided exponential weights and small bandwidth}\label{sec:weights}

We first isolate the weighted approximation that compensates for the
radial factor introduced by homogeneity. Exponential decay of the weight
on one half-line permits approximation with arbitrarily small Fourier
support. This observation will apply to supporting slopes of a convex
function.

All Fourier and weighted function spaces below have complex scalars. Let
\(C_0(\R^r)\) denote the continuous complex-valued functions that vanish
at infinity, and \(B(0,\sigma)\) is the open Euclidean ball.
Our Fourier convention is
\[
\widehat h(\xi)=\int_{\R^r}h(x)e^{-ix\cdot\xi}\,dx,
\qquad
\mathcal F^{-1}\phi(x)=(2\pi)^{-r}
\int_{\R^r}\phi(\xi)e^{ix\cdot\xi}\,d\xi.
\]
For \(\sigma>0\), let
\[
\cB_\sigma(\R^r)
=\{\mathcal F^{-1}\phi:
\phi\in C_c^\infty(B_{\R^r}(0,\sigma))\}.
\]
Every member is a Schwartz function. Taking real parts preserves this
space and does not increase the error. In dimension
zero, \(\cB_\sigma(\R^0)\) denotes the constants on a point.

\begin{lemma}[Approximation with a one-sided exponential weight]\label{lem:onesided}
Let \(c>0\), \(M\geq0\), and \(\delta>0\), and define
\[
W(t)=(1+|t|)^M e^{-ct_+},
\qquad t_+=\max(t,0).
\]
Then \(\cB_\delta(\R)\) is dense in
\[
X_W=\{h\in C(\R):Wh\in C_0(\R)\},
\qquad
\|h\|_{X_W}=\|Wh\|_\infty.
\]
\end{lemma}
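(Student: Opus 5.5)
The plan is to argue by duality (Hahn--Banach), reducing density to a uniqueness statement for a holomorphic function on a strip.

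\emph{Setup.} First, \(\cB_\delta(\R)\subset X_W\): every member is Schwartz, and \(W\le(1+|t|)^M\), so \(Wh\in C_0(\R)\). The map \(h\mapsto Wh\) is an isometric isomorphism \(X_W\to C_0(\R)\). By the Riesz representation theorem, every bounded functional on \(X_W\) therefore has the form
\[
\ell(h)=\int_\R W h\,d\mu
\]
for a finite complex Borel measure \(\mu\). It suffices to show that if \(\ell\) vanishes on \(\cB_\delta(\R)\), then \(\mu=0\).

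\emph{Holomorphic extension.} Fix a Schwartz function \(k\in\cB_{\delta/2}(\R)\). For real \(\xi\) with \(|\xi|<\delta/2\), the modulated function \(e^{-i\xi t}k(t)\) has Fourier support in \(B(0,\delta)\), so it lies in \(\cB_\delta(\R)\). Define
\[
\Lambda(z)=\int_\R W(t)k(t)e^{-izt}\,d\mu(t),
\qquad 0\le \operatorname{Im} z\le c .
\]
With \(z=x+iy\) we have \(|e^{-izt}|=e^{ty}\). For \(t\le 0\) and \(y\ge0\) this is at most \(1\), and \(W|k|\) is bounded. For \(t>0\) the integrand is bounded by \((1+t)^Me^{-ct}e^{yt}|k(t)|\le(1+t)^M|k(t)|\), which is bounded because \(k\) is Schwartz. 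Hence, by dominated convergence and Morera's theorem, \(\Lambda\) is continuous on the closed strip \(\{0\le y\le c\}\) and holomorphic in its interior. This is exactly where the one-sided weight enters: exponential decay on the right half-line pays for the growth of \(e^{ty}\), while on the left half-line no decay is needed. The hypothesis on \(\ell\) gives \(\Lambda(\xi)=0\) for real \(|\xi|<\delta/2\).

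\emph{Uniqueness.} The function \(\Lambda\) is holomorphic in the open strip, continuous up to the real axis, and vanishes on a real interval. By the Schwarz reflection principle, \(\Lambda\) extends holomorphically across that interval and vanishes there, so \(\Lambda\equiv0\) in the strip. By continuity, \(\Lambda(\xi)=0\) for every real \(\xi\). The function \(\Lambda|_\R\) is the Fourier--Stieltjes transform of the finite measure \(Wk\,\mu\), so \(Wk\,\mu=0\).

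\emph{Removing \(k\).} Replacing \(k\) by a translate \(k(\cdot-a)\), which stays in \(\cB_{\delta/2}(\R)\), gives \(Wk(\cdot-a)\,\mu=0\) for all \(a\in\R\). A nonzero bandlimited function is the restriction of an entire function, so its real zeros are discrete. Hence for each point \(t_0\) some translate satisfies \(k(t_0-a)\ne0\), and so is nonzero on a neighbourhood of \(t_0\). Since also \(W>0\), \(\mu\) vanishes near every point, so \(\mu=0\) and density follows.

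\emph{Main obstacle.} The delicate step is the holomorphic extension: one must verify continuity of \(\Lambda\) up to both boundary lines with exactly the weight \((1+|t|)^Me^{-ct_+}\), and make the reflection argument rigorous. If continuity at \(y=c\) caused trouble, I would simply work on the strip \(0\le y\le c/2\). Only continuity up to the real axis is actually used.
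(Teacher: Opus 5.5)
Your proof is correct and follows essentially the same route as the paper: Hahn--Banach and Riesz duality, a transform of the auxiliary measure that is holomorphic on \(0<\operatorname{Im}z<c\) because of the one-sided weight, then Schwarz reflection with the identity theorem, and Fourier uniqueness for finite measures. The only difference is the final step. You remove the auxiliary bandlimited factor with translates of a fixed nonzero \(k\), where nonvanishing at a single point already suffices, so discreteness of zeros is not needed. The paper instead uses dilates \(q(\eta t)\) with \(q(0)=1\) and lets \(\eta\to0\).
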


\begin{proof}
Multiplication by \(W\) is an isometric isomorphism from \(X_W\) onto
\(C_0(\R)\). If density fails, the Hahn--Banach and Riesz representation
theorems give a nonzero finite complex
measure \(\lambda\) such that
\[
\int W(t)F(t)\,d\lambda(t)=0
\qquad(F\in\cB_\delta(\R)).
\]
Choose a Schwartz function \(q\) with \(q(0)=1\) and
\(\supp\widehat q\subset(-1,1)\), and define \(q_\eta(t)=q(\eta t)\).
For \(0<\eta<\delta\), define the finite complex measure
\[
d\mu_\eta(t)=q_\eta(t)W(t)\,d\lambda(t).
\]
Its finiteness follows because \(q_\eta\) is Schwartz and \(W\) grows
at most polynomially. If \(|\xi|<\delta-\eta\), the modulated function
\(e^{-i\xi t}q_\eta(t)\) belongs to \(\cB_\delta(\R)\). The annihilation
hypothesis therefore gives
\[
\widehat{\mu_\eta}=0
\quad\hbox{on }(-\delta+\eta,\delta-\eta).
\]
Furthermore,
\[
\int e^{ct}\,d|\mu_\eta|(t)
\leq
\sup_t\{(1+|t|)^M|q_\eta(t)|\}\,\|\lambda\|<\infty,
\]
since \(ct-ct_+\leq0\).

Thus
\[
H_\eta(z)=\int e^{-izt}\,d\mu_\eta(t)
\]
is holomorphic on \(0<\operatorname{Im}z<c\) and continuous on the
real boundary. The exponential moments at heights \(0\) and \(c\)
give the required domination. On a compact substrip, the positive
distance from its two boundary lines also controls every derivative
under the integral.
The boundary values vanish on a nonempty real interval. On a smaller open
interval, the reflected function \(\overline{H_\eta(\overline z)}\)
agrees with the zero boundary values. The Schwarz reflection principle
therefore gives a holomorphic extension across that interval. The
identity theorem implies that \(H_\eta\) vanishes throughout the strip
and, by continuity, on the real axis. Fourier uniqueness for finite complex
measures gives \(\mu_\eta=0\).

On every fixed compact interval, \(q_\eta\) has no zeros for all sufficiently
small \(\eta\), since \(q(0)=1\). Because \(W\) is positive and continuous,
the identity \(q_\eta W\lambda=0\) implies that \(\lambda\) vanishes on
that compact interval. Hence \(\lambda=0\), a contradiction.
\end{proof}

\begin{lemma}[Tensor-product density]\label{lem:tensor}
For \(1\leq i\leq r\), let \(W_i>0\) be continuous on \(\R\), and let
\(E_i\) be dense in \(X_{W_i}\). Finite sums of products
\[
\prod_{i=1}^r F_i(t_i),\qquad F_i\in E_i,
\]
are dense in the space of continuous \(H\) for which
\[
\left(\prod_iW_i(t_i)\right)H(t)\in C_0(\R^r),
\]
with the corresponding weighted supremum norm.
\end{lemma}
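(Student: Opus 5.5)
Multiplication by \(W(t)=\prod_iW_i(t_i)\) is an isometric isomorphism from the weighted space \(X_W\) onto \(C_0(\R^r)\). Under it, the products \(\prod_iF_i(t_i)\) go to \(\prod_i (W_iF_i)(t_i)\), and each factor \(W_iF_i\) lies in \(C_0(\R)\). It therefore suffices to show that finite sums of \(\prod_i g_i(t_i)\), with \(g_i\in W_iE_i\), are dense in \(C_0(\R^r)\). By hypothesis each \(W_iE_i\) is dense in \(C_0(\R)\). I would argue in two steps.

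\emph{Step 1: tensor products of full \(C_0\) spaces.} Finite sums of products \(\prod_i g_i(t_i)\) with \(g_i\in C_0(\R)\) are dense in \(C_0(\R^r)\). This follows from the Stone--Weierstrass theorem for locally compact spaces. These sums form a complex subalgebra closed under conjugation. It vanishes at no point, since one can choose \(g_i(t_i)\ne0\). It separates points: if \(t\ne t'\), they differ in some coordinate \(i\), and one takes \(g_i\) separating \(t_i,t'_i\) with the other factors nonzero at both points. Alternatively, one can approximate a compactly supported function on a large cube by polynomials times a fixed product cutoff, but Stone--Weierstrass is cleaner.

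\emph{Step 2: replacing each factor by an element of \(W_iE_i\).} Fix a product \(\prod_i g_i\) with \(g_i\in C_0(\R)\), and choose \(\tilde g_i\in W_iE_i\) with \(\|g_i-\tilde g_i\|_\infty<\eta\). Telescoping gives
\[
\Bigl\|\prod_i g_i-\prod_i\tilde g_i\Bigr\|_\infty
\le\sum_{k=1}^r\Bigl(\prod_{i<k}\|\tilde g_i\|_\infty\Bigr)\|g_k-\tilde g_k\|_\infty\Bigl(\prod_{i>k}\|g_i\|_\infty\Bigr).
\]
The right-hand side is at most \(r\eta(\max_i\|g_i\|_\infty+1)^{r-1}\), which tends to \(0\) as \(\eta\to0\). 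Applying this to each of the finitely many terms of a sum from Step 1, and using the triangle inequality, yields density.

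\emph{Conclusion.} Pulling back through the isometry: given \(H\) with \(WH\in C_0(\R^r)\), approximate \(WH\) by \(\sum\prod_i W_i(t_i)F_i(t_i)\) with \(F_i\in E_i\). Dividing by \(W\) then gives the weighted approximation of \(H\). The only genuine input is Step 1. The one point to check there is that products of \(C_0(\R)\) functions lie in \(C_0(\R^r)\). This holds because each product is bounded by \(\min_i|g_i(t_i)|\) times the other sup norms, and \(|t|\to\infty\) forces some \(|t_i|\to\infty\). No real obstacle is expected.
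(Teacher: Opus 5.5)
Your proof is correct and follows the paper's route. You reduce to \(C_0(\R^r)\) via the product-weight isometry, show that finite sums of one-dimensional \(C_0\) products are dense, and replace each factor by some \(W_iF_i\) with a telescoping estimate; the only difference is that you obtain the middle density step from the locally compact Stone--Weierstrass theorem, while the paper truncates to a compact box and uses a rectangular partition of unity. One small precision in your point-separation check: merely ``separating'' \(t_i,t_i'\) does not suffice when the other factors take different nonzero values at \(t\) and \(t'\), so take \(g_i\) with \(g_i(t_i)\neq0=g_i(t_i')\), or take the other factors equal to \(1\) at both points.
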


\begin{proof}
Multiplication by the product weight identifies the target space with
\(C_0(\R^r)\). Finite sums of products of one-dimensional \(C_0\) functions
are dense in this space. To see this, first truncate a target to a compact box, then
approximate it there using a finite rectangular partition of unity.
Each one-dimensional factor can in turn be approximated by \(W_iF_i\),
and a telescoping product estimate controls the error.
\end{proof}

\section{Gaussian approximation with convex weights}\label{sec:convex}

We next apply the one-dimensional density result in the directions
spanned by gradients of a convex function. The function is constant in
the remaining directions, where a Fourier cutoff provides the Gaussian
factor. We begin with the cutoff estimate.

For \(A\geq1\), define
\[
w_A(x)=(1+|x|/A)^L,
\qquad G_A(x)=e^{-|x|^2/A^2},
\qquad
C_L=\sup_{r\geq0}(1+r)^Le^{-r^2}<\infty.
\]

\begin{lemma}[Fourier cutoff of a Gaussian]\label{lem:cutoff}
Fix \(s\), \(L\geq0\), and \(\sigma>0\). For each \(0\leq k\leq s\)
there are real \(g_{A,k}\in\cB_{\sigma/4}(\R^k)\) such that
\[
e_k(A):=
\sup_{z\in\R^k}(1+|z|/A)^L
|g_{A,k}(z)-e^{-|z|^2/A^2}|\rightarrow0
\]
as \(A\to\infty\). In dimension zero set \(g_{A,0}=1\) and
\(e_0(A)=0\). The convergence is uniform over these finitely many
dimensions.
\end{lemma}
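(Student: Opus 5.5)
The plan is to build \(g_{A,k}\) explicitly by a smooth Fourier cutoff of the Gaussian and to show that the cutoff error is superpolynomially small in \(A\). For \(k\ge1\), fix once and for all a radial \(\chi\in C_c^\infty(B_{\R^k}(0,\sigma/4))\) with \(\chi=1\) on \(B(0,\sigma/8)\) and \(0\le\chi\le1\). The Fourier transform of \(G_A(z)=e^{-|z|^2/A^2}\) is
\[
\widehat{G_A}(\xi)=\pi^{k/2}A^k e^{-A^2|\xi|^2/4}.
\]
Set \(g_{A,k}=\mathcal F^{-1}(\chi\widehat{G_A})\). Then \(g_{A,k}\in\cB_{\sigma/4}(\R^k)\). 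It is real, because \(\chi\widehat{G_A}\) is real and even, and in any case taking real parts is harmless. The error is
\[
g_{A,k}-G_A=-\mathcal F^{-1}\bigl((1-\chi)\widehat{G_A}\bigr)=:-\mathcal F^{-1}\psi_A,
\]
where \(\psi_A\) is supported in \(|\xi|\ge\sigma/8\). On that region \(\psi_A\) is bounded, together with all its derivatives, by \(C_m A^{k+m}e^{-A^2\sigma^2/512}\) times a Gaussian factor in \(\xi\) that remains integrable.

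To control the polynomial weight, we use that \(1+|z|/A\le1+|z|\) for \(A\ge1\). It therefore suffices to bound \((1+|z|)^{L}|\mathcal F^{-1}\psi_A(z)|\). Take an integer \(m\ge L\) and use the standard estimate
\[
|z^\gamma\mathcal F^{-1}\psi(z)|\le(2\pi)^{-k}\|\partial^\gamma\psi\|_{L^1},
\qquad |\gamma|\le m.
\]
Expanding \(\partial^\gamma\bigl((1-\chi)\widehat{G_A}\bigr)\) by Leibniz gives two kinds of terms. Derivatives of \(1-\chi\) are bounded and supported in \(\sigma/8\le|\xi|\le\sigma/4\). Derivatives of \(\widehat{G_A}\) are polynomials in \(A\) and \(\xi\) of degree at most \(2m\) times \(A^ke^{-A^2|\xi|^2/4}\). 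On \(|\xi|\ge\sigma/8\), split \(e^{-A^2|\xi|^2/4}\le e^{-A^2\sigma^2/512}e^{-A^2|\xi|^2/8}\). The remaining factor \(A^{k+2m}(1+|\xi|)^{2m}e^{-A^2|\xi|^2/8}\) has \(L^1\) norm bounded by a polynomial in \(A\) for \(A\ge1\). Hence
\[
e_k(A)\le C(k,L,\sigma)\,A^{P}e^{-A^2\sigma^2/512}\to0 .
\]

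Uniformity is automatic. Only the finitely many dimensions \(0\le k\le s\) occur, so we take the maximum of the constants; \(k=0\) is trivial by definition. The only point requiring care is the bookkeeping in the Leibniz expansion, namely checking that the polynomial growth in \(A\) from differentiating \(\widehat{G_A}\) is beaten by the factor \(e^{-cA^2}\) gained from the cutoff region. That is straightforward, so no step here is a genuine obstacle.
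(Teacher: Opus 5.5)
Your proposal is correct and follows the paper's proof. Both use the same smooth radial Fourier cutoff, equal to one on \(B(0,\sigma/8)\) and supported in \(B(0,\sigma/4)\), and both control the weighted supremum of the error by \(L^1\) norms of its Fourier-side derivatives. The only cosmetic difference is that the paper rescales \(z=Ay\) to make the weight independent of \(A\), while you use \(1+|z|/A\le1+|z|\) and let the factor \(e^{-A^2\sigma^2/512}\) absorb the resulting polynomial growth in \(A\), which also gives you an explicit rate.
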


\begin{proof}
For \(k\geq1\), choose a real radial \(\chi_k\in C_c^\infty(B(0,1))\)
equal to one on \(B(0,1/2)\), and set
\[
g_{A,k}
=\mathcal F^{-1}\left[
\chi_k(4\xi/\sigma)\,
A^k\pi^{k/2}e^{-A^2|\xi|^2/4}\right].
\]
After the change of variables \(z=Ay\), the function
\(y\mapsto e^{-|y|^2}-g_{A,k}(Ay)\) is the inverse Fourier transform of
\[
\left[1-\chi_k\left(\frac{4\zeta}{\sigma A}\right)\right]
\pi^{k/2}e^{-|\zeta|^2/4}.
\]
For an integer \(m>L\), the weighted supremum of this inverse transform
is bounded by a constant times the sum of the \(L^1\) norms of its Fourier
derivatives of orders at most \(m\). This follows by integration by parts.
Every such derivative tends to zero in \(L^1\): the uncut Gaussian term
is restricted to \(|\zeta|\geq\sigma A/8\), and derivatives of the cutoff
have additional nonpositive powers of \(A\) and the same Gaussian-tail
support. Hence \(e_k(A)\to0\). Taking the maximum over the finitely many
\(k\) proves the uniform assertion.
\end{proof}

\begin{lemma}[Gaussian approximation with a convex weight]\label{lem:weighted}
Fix \(s\geq1\), \(L\geq0\), and \(\sigma>0\). There is a function
\(\Delta(A)\to0\) as \(A\to\infty\), depending only on these parameters,
such that every globally Lipschitz convex \(f:\R^s\to[0,\infty)\) with
\(f(0)=0\) satisfies
\[
\inf_{F\in\cB_\sigma(\R^s)}
\sup_x w_A(x)|e^{-f(x)}F(x)-G_A(x)|\leq\Delta(A).
\]
The approximants may be chosen real.
\end{lemma}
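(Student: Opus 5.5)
The plan is to split \(\R^s\) into the directions in which \(f\) actually varies and the directions in which it is constant. On the first factor Lemmas~\ref{lem:onesided} and~\ref{lem:tensor} give arbitrarily good approximation for each fixed \(f\). On the second factor Lemma~\ref{lem:cutoff} supplies the Gaussian, and this is the only error that survives. The point is that \(\Delta(A)\) does not need to be produced by a uniform density argument. For each fixed \(f\) and \(A\) the infimum in the statement should be at most a constant times \(\max_k e_k(A)\) plus an arbitrarily small quantity. So I would take \(\Delta(A)=C\max_{0\le k\le s}e_k(A)\) with \(C\) depending only on \(L\). This is independent of \(f\), of its Lipschitz constant, and of the subspace.

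\textbf{Step 1 (reduction to the span of subgradients).} Let \(V\) be the linear span of all subgradients \(\partial f(x)\), \(x\in\R^s\), and let \(k=\dim V\). If \(w\perp V\), then \(t\mapsto f(x+tw)\) is Lipschitz with a.e.\ derivative \(p\cdot w=0\) for some \(p\in\partial f(x+tw)\). Hence \(f(x+w)=f(x)\), and \(f(y+z)=f(y)\) for \(y\in V\), \(z\in V^\perp\). The Gaussian factors as \(G_A(y+z)=e^{-|y|^2/A^2}e^{-|z|^2/A^2}\), and \(w_A(y+z)\le w_A(y)\,w_A(z)\). I will look for \(F(y+z)=F_1(y)\,g_{A,s-k}(z)\), where \(g_{A,s-k}\) is taken from Lemma~\ref{lem:cutoff} on \(V^\perp\) and has Fourier support in \(B(0,\sigma/4)\). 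I will require \(F_1\in\cB_{\sigma/2}(V)\), so that \(F\in\cB_\sigma(\R^s)\). Adding and subtracting \(e^{-|y|^2/A^2}g_{A,s-k}(z)\) gives
\[
w_A|e^{-f}F-G_A|\le \|g_{A,s-k}\|_\infty\,\sup_y w_A(y)\bigl|e^{-f(y)}F_1(y)-e^{-|y|^2/A^2}\bigr| + C_L\,e_{s-k}(A).
\]
Here \(\|g_{A,s-k}\|_\infty\le 1+e_{s-k}(A)\) is bounded uniformly. It therefore suffices to make the first supremum arbitrarily small for the fixed \(f\) and \(A\). If \(k=0\), then \(f\equiv0\) and we are done.

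\textbf{Step 2 (one-sided exponential domination).} Choose points \(a_j\) and subgradients \(p_j\in\partial f(a_j)\), \(1\le j\le k\), that form a basis of \(V\). The supporting inequality gives
\[
f(y)\ge p_j\cdot y+f(a_j)-p_j\cdot a_j\ge p_j\cdot y+b_j,
\]
with \(b_j\) finite. Since \(f\ge0\), we get \(f\ge\max_j (p_j\cdot y+b_j)_+\ge \frac1k\sum_j (t_j)_+ - C\), where \(t_j=p_j\cdot y\). Thus
\[
e^{-f(y)}\le C_f\prod_j e^{-(t_j)_+/k}.
\]
In the coordinates \(t=(t_1,\dots,t_k)\), which are related to \(y\) by a fixed linear isomorphism, I also have \(w_A(y)\le C\prod_j(1+|t_j|)^L\). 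So the weighted error is dominated by \(C\,\|\,\prod_j W(t_j)\,(F_1-H)\|_\infty\), where
\[
W(t)=(1+|t|)^Le^{-t_+/k},\qquad H=e^{f}e^{-|y|^2/A^2}.
\]
This uses the identity \(e^{-f}F_1-e^{-|y|^2/A^2}=e^{-f}(F_1-H)\). The target is admissible: \(\prod_jW(t_j)\,H\in C_0\), because \(f\) is Lipschitz, so \(e^f\) grows at most exponentially, which the Gaussian kills.

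\textbf{Step 3 (density and bandwidth).} By Lemma~\ref{lem:onesided} with \(c=1/k\), \(M=L\) and a small \(\delta\), the space \(\cB_\delta(\R)\) is dense in \(X_W\). Lemma~\ref{lem:tensor} then approximates \(H\) by finite sums of products \(\prod_jF_j(p_j\cdot y)\) with \(F_j\in\cB_\delta(\R)\). Each such product has Fourier transform supported in \(\{\sum_j\tau_jp_j:|\tau_j|<\delta\}\). Choosing \(\delta<\sigma/(2\sum_j|p_j|)\) puts it in \(\cB_{\sigma/2}(V)\); the transform is a smooth compactly supported function on \(V\) because the change of variables is linear. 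Taking real parts keeps everything real and does not increase the error.

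The step I expect to need the most care is Step 2: obtaining a product of one-sided weights that dominates \(w_Ae^{-f}\). Averaging the supporting functionals handles this. The remaining subtlety is making sure the dependence on \(f\), through \(C_f\), the basis \(p_j\) and \(\delta\), enters only the density step and never \(\Delta(A)\).
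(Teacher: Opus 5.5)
Your proposal is correct and follows essentially the same route as the paper's proof: you split off the directions in which $f$ is constant, dominate $w_Ae^{-f}$ by a product of one-sided weights obtained by averaging supporting functionals, apply Lemmas~\ref{lem:onesided} and~\ref{lem:tensor} with small bandwidth on the active subspace, use the Fourier-cutoff Gaussian of Lemma~\ref{lem:cutoff} on the complement, and arrive at the same $\Delta(A)=C_L\max_k e_k(A)$. The only slip is cosmetic: because $w_A(y+z)\le w_A(y)w_A(z)$, the coefficient of your first error term should be $\|w_Ag_{A,s-k}\|_\infty\le C_L+e_{s-k}(A)$ rather than $\|g_{A,s-k}\|_\infty$, and since this is still finite nothing changes.
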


\begin{proof}
Let \(S\) be the linear span of the gradients of \(f\) at its
differentiability points, and let \(r=\dim S\).
The function \(f\) is constant on translates of \(S^\perp\).
Indeed, the directional distributional derivatives in \(S^\perp\) vanish,
because the almost-everywhere gradient belongs to \(S\). Convolving with
a smooth compactly supported approximate identity gives smooth functions
with the same zero derivatives, hence constant on those fibers. Their
locally uniform limit has the same property.

Suppose first that \(r>0\). Choose linearly independent gradients
\(a_1,\ldots,a_r\) spanning \(S\), attained at differentiability points
\(y_1,\ldots,y_r\). Define
\[
C_i=a_i\cdot y_i-f(y_i).
\]
The supporting inequalities and \(f(0)=0\) give \(C_i\geq0\) and
\[
f(x)\geq a_i\cdot x-C_i.
\]
On \(S\), use the invertible linear coordinates
\[
t_i=a_i\cdot y,\qquad 1\leq i\leq r,
\qquad t=Ty.
\]
Since \(f\geq0\),
\[
(t_i)_+\leq f(y)+C_i.
\]
Consequently
\[
f(y)\geq\frac1r\sum_{i=1}^r(t_i)_+-C,
\qquad C=\frac1r\sum_{i=1}^r C_i.
\]
The inverse coordinate map and
\(1+\sum b_i\leq\prod(1+b_i)\), for \(b_i\geq0\), give a finite constant \(C_f\) such
that
\begin{equation}\label{eq:productdomination}
(1+|y|/A)^Le^{-f(y)}
\leq C_f\prod_{i=1}^r
\left[(1+|t_i|)^Le^{-(t_i)_+/r}\right].
\end{equation}
The constant \(C_f\) may depend on \(f\), but is independent of \(A\geq1\).

The function to be approximated in these coordinates is
\[
H(t)=e^{f(T^{-1}t)}e^{-|T^{-1}t|^2/A^2}.
\]
It belongs to the product weighted \(C_0\) space in
\eqref{eq:productdomination}. The exponent \(f(T^{-1}t)\) grows at most
linearly, whereas the Gaussian exponent is a negative definite quadratic.

By Lemmas~\ref{lem:onesided} and \ref{lem:tensor}, \(H\) can be approximated
arbitrarily well in that product weighted norm by finite sums of products
of one-dimensional bandlimited Schwartz functions. Their one-dimensional
bandwidths can be arbitrarily small. Choose a common bandwidth
\(\delta>0\) with
\[
\delta\sum_{i=1}^r|a_i|<\sigma/4.
\]
After returning to the \(y\) coordinates, each such finite sum is a
Schwartz function \(F_S\) on \(S\), with smooth Fourier transform supported
in
\[
\left\{\sum_i\omega_i a_i:|\omega_i|<\delta\right\}
\subset B_S(0,\sigma/4).
\]
By \eqref{eq:productdomination}, for every \(\eta>0\) we may arrange
\begin{equation}\label{eq:activeerror}
\sup_{y\in S}(1+|y|/A)^L
|e^{-f(y)}F_S(y)-e^{-|y|^2/A^2}|<\eta.
\end{equation}

Decompose \(x=y+z\), where \(y\in S\) and \(z\in S^\perp\), and let
\(k=s-r\). Let \(g_{A,k}\) be supplied by Lemma~\ref{lem:cutoff}, using
an orthonormal identification of \(S^\perp\) with \(\R^k\). Then
\[
F(y+z)=F_S(y)g_{A,k}(z)
\]
belongs to \(\cB_\sigma(\R^s)\): the two Fourier supports lie in orthogonal
balls of radius \(\sigma/4\), and their product lies strictly inside
\(B(0,\sigma)\).

Using
\[
w_A(y+z)\leq(1+|y|/A)^L(1+|z|/A)^L
\]
and splitting the error between the two factors gives
\[
\|w_A(e^{-f}F-G_A)\|_\infty
\leq
\eta\,\|(1+|z|/A)^Lg_{A,k}(z)\|_\infty+C_Le_k(A).
\]
The first coefficient is finite. Letting \(\eta\downarrow0\) proves the
desired infimum bound with
\[
\Delta(A)=C_L\max_{0\leq k\leq s}e_k(A).
\]

If \(r=0\), constancy on translates of \(S^\perp\) implies \(f=0\).
Take \(F=g_{A,s}\). If \(r=s\), the complementary factor is the
dimension-zero constant, and the infimum is zero by
\eqref{eq:activeerror}. In both cases the stated bound follows.
Finally, taking real parts does not increase error against the real target.
\end{proof}

\subsection{A finite family of approximants}

The preceding lemma controls the approximation error independently of the
convex function. To obtain uniform polynomial estimates later, we also
need to control the approximants themselves. Compactness gives a finite
family from which every approximant can be selected.

For \(C>0\), let
\[
\cC_C=
\{f:\R^s\to[0,\infty):
f\text{ convex},\ f(0)=0,\ \Lip(f)\leq C\}.
\]

\begin{lemma}[A finite family for a Lipschitz class]\label{lem:finite}
Fix \(s\geq1\), \(L\geq0\), \(\sigma>0\), \(C>0\), and \(\tau>0\). There are \(A\geq1\) and finitely
many real functions
\[
F_1,\ldots,F_J\in\cB_\sigma(\R^s)
\]
such that every \(f\in\cC_C\) has an index \(j\) satisfying
\[
\sup_x w_A(x)|e^{-f(x)}F_j(x)-G_A(x)|<\tau.
\]
\end{lemma}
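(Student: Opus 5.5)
We plan to combine Lemma~\ref{lem:weighted} with a compactness argument for \(\cC_C\) in the topology of locally uniform convergence. First fix \(A\geq1\) with \(\Delta(A)<\tau/2\), where \(\Delta\) is the function supplied by Lemma~\ref{lem:weighted} for the parameters \(s,L,\sigma\). Then for each \(f\in\cC_C\) there is a real \(F_f\in\cB_\sigma(\R^s)\) with
\[
\sup_x w_A(x)|e^{-f(x)}F_f(x)-G_A(x)|<\tau/2 .
\]
The remaining task is to show that this single \(F_f\) also serves every \(g\in\cC_C\) that is close to \(f\) in a suitable sense. A finite subcover then yields \(F_1,\ldots,F_J\).

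The key step is a stability estimate. For \(g\in\cC_C\) we write
\[
w_A|e^{-g}F_f-G_A|\leq w_A|e^{-f}F_f-G_A|+w_A|F_f|\,|e^{-g}-e^{-f}|.
\]
The function \(w_A|F_f|\) is bounded, say by \(M_f\), since \(F_f\) is Schwartz. Since \(f,g\geq0\), we have \(|e^{-g}-e^{-f}|\leq\min(1,|g-f|)\). The Schwartz decay of \(F_f\) gives a radius \(R_f\) with \(w_A(x)|F_f(x)|<\tau/4\) for \(|x|>R_f\). On \(|x|\leq R_f\) it suffices to require \(\sup_{|x|\leq R_f}|g-f|<\tau/(4M_f+1)\). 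Define the open neighbourhood
\[
U_f=\{g\in\cC_C:\ \sup_{|x|\leq R_f}|g-f|<\tau/(4M_f+1)\}.
\]
Every \(g\in U_f\) then satisfies the bound \(<\tau\) with \(F_f\). The point is that the exterior region is handled by the decay of \(F_f\) alone, so no uniform control of \(g-f\) at infinity is needed.

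Finally, we show that \(\cC_C\) is compact in the topology of locally uniform convergence, which is metrizable. Its members are \(C\)-Lipschitz with \(f(0)=0\), so they are uniformly bounded on compacts by \(C|x|\) and equicontinuous. Arzel\`a--Ascoli together with a diagonal argument over balls of radius \(n\) gives sequential compactness. The conditions of convexity, nonnegativity, \(f(0)=0\) and \(\Lip\leq C\) are all preserved under pointwise limits, so \(\cC_C\) is closed. The sets \(U_f\) are open in this topology and cover \(\cC_C\), so finitely many \(U_{f_1},\ldots,U_{f_J}\) suffice; we set \(F_j=F_{f_j}\).

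The only delicate point is the stability estimate, specifically the need to cut off at a radius depending on \(f\). This is legitimate because \(A\) was fixed before \(f\), so the constants \(M_f\) and \(R_f\) depend only on the chosen \(F_f\). Everything else is routine.
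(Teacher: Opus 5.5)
Your proposal is correct and follows the paper's proof essentially step for step. Both fix \(A\) via Lemma~\ref{lem:weighted} and use compactness of \(\cC_C\) under locally uniform convergence (Arzel\`a--Ascoli plus a diagonal argument). Both show that each chosen \(F\) works on an open neighbourhood by splitting into a ball, where \(|e^{-f}-e^{-g}|\le|f-g|\), and an exterior, where \(w_AF\) is small. The differences are cosmetic: you use the margin \(\Delta(A)<\tau/2\) where the paper uses \(\tau/4\), and you write the neighbourhood \(U_f\) explicitly rather than invoking continuity of the error functional.
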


\begin{proof}
The class \(\cC_C\) is compact for locally uniform convergence. To see this, note that
on every bounded ball its members are uniformly bounded and equicontinuous,
and convexity, nonnegativity, the value at zero, and the Lipschitz bound
persist under a locally uniform limit. Arzelà--Ascoli and a diagonal
argument give sequential compactness. The locally uniform topology is
metrizable, for example by
\[
d_{\mathrm{loc}}(f,g)=\sum_{m=1}^{\infty}2^{-m}
\min\{1,\|f-g\|_{C(\overline B(0,m))}\},
\]
so the class is compact.

Choose \(A\geq1\) so that \(\Delta(A)<\tau/4\) in
Lemma~\ref{lem:weighted}. Each \(f\in\cC_C\) then has a real
\(F\in\cB_\sigma\) with error below \(\tau/2\).

For a fixed \(F\), the error functional
\[
f\longmapsto\|w_A(e^{-f}F-G_A)\|_\infty
\]
is continuous for locally uniform convergence on \(\cC_C\).
Indeed, the difference of two such errors is at most
\[
\|w_AF(e^{-f}-e^{-g})\|_\infty.
\]
On a fixed ball, \(|e^{-f}-e^{-g}|\leq|f-g|\), since \(f,g\geq0\).
Outside a sufficiently large ball, \(w_AF\) is uniformly small, while
\(|e^{-f}-e^{-g}|\leq2\).

The open neighbourhoods on which the chosen \(F\) has error below \(\tau\)
cover \(\cC_C\). A finite subcover gives the required finite family.
\end{proof}

\section{Homogeneous Gaussian approximation on the boundary}\label{sec:kernels}

We now transfer the convex-weighted approximation to the boundary of the
unit ball. Coordinates in a supporting hyperplane give a nonnegative
convex function with a uniform Lipschitz bound. The finite family from
Lemma~\ref{lem:finite} provides a function \(F_j\) for which
\(e^{-f_N}F_j\) approximates the Gaussian. Small Fourier support permits
Taylor truncation at half the prescribed degree.

\begin{proof}[Proof of Theorem~\ref{thm:kernel}]
Define
\[
D=\beta/\alpha,\qquad
C=D(1+D),\qquad
\sigma_0=\frac1{64eD},\qquad
B_L=\max(1,\sqrt2D)^L.
\]
Choose
\[
0<\tau<\min\{1/4,\eps/(8B_L)\}.
\]
Apply Lemma~\ref{lem:finite} with Lipschitz bound \(C\) and bandwidth
\(\sigma_0\). This determines \(A\geq1\) and a finite family of real functions
\(F_1,\ldots,F_J\), before any norm, center, or degree is chosen.

\subsection{Coordinates from a supporting hyperplane}

Fix an admissible norm \(\nu\) and \(v\in\Sph^s\), and let
\(a_0=\nu(v)\), so \(\alpha\leq a_0\leq\beta\). Choose a supporting
linear functional \(\ell\) with
\[
\ell(v)=a_0,\qquad |\ell(x)|\leq\nu(x).
\]
In particular \(\|\ell\|\leq\beta\).

Let \(U:\R^s\to v^\perp\) be an isometry, and let \(U^*:\R^{s+1}\to\R^s\)
denote its adjoint. Define
\[
Bt=Ut-\frac{\ell(Ut)}{a_0}v.
\]
Then
\[
\ell(Bt)=0,\qquad U^*Bt=t,\qquad \|B\|\leq1+D.
\]
For each even \(N\geq2\), set
\begin{equation}\label{eq:fn}
f_N(x)=N\left(\frac{\nu(v+Bx/N)}{a_0}-1\right).
\end{equation}
The supporting inequality gives \(f_N\geq0\), and \(f_N(0)=0\).
It is convex and has Lipschitz constant at most \(D(1+D)=C\).
Therefore \(f_N\in\cC_C\). Select an index \(j\) for which
\begin{equation}\label{eq:selected}
\|w_A(e^{-f_N}F_j-G_A)\|_\infty<\tau.
\end{equation}
The selected function may depend on \(\nu,v,N\), but it always belongs to the
same fixed finite family.

For a point \(z=x_\nu(u)\in\partial K_\nu\) with \(a=\ell(z)>0\), define
\[
t=\frac{a_0U^*z}{a}.
\]
These coordinates satisfy
\begin{equation}\label{eq:chartidentities}
z=\frac{a}{a_0}(v+Bt),\qquad
a=\frac{a_0}{\nu(v+Bt)}
=\left(1+\frac{f_N(Nt)}N\right)^{-1}.
\end{equation}

On the chart \(a\geq1/2\), the direction \(u\) satisfies
\(\ell(u)\geq\nu(u)/2\geq\alpha/2\). Hence
\[
t=\frac{a_0U^*u}{\ell(u)},\qquad |t|\leq2D.
\]
The elementary chord--sine relations
\[
|U^*u|\leq\rho(u,v)\leq\sqrt2\,|U^*u|
\]
give
\begin{equation}\label{eq:anglecomparison}
\frac{|t|}{2D}\leq\rho(u,v)\leq\sqrt2D\,|t|.
\end{equation}
Thus the angular weight on this chart is at most \(B_Lw_A(Nt)\).

For bounded \(x=Nt\), the angular map has the uniform expansion
\[
\frac{v+Bt}{|v+Bt|}=v+Ut+O_D(|t|^2).
\]
Consequently, for each fixed \(M\),
\begin{equation}\label{eq:localangle}
\sup_{|x|\leq M}
\left|N\rho\left(\frac{v+Bx/N}{|v+Bx/N|},v\right)-|x|\right|
 =O_D(M^2/N),
\end{equation}
uniformly over the geometric choices.

\subsection{Taylor approximation and homogenization}

Absorbing the Fourier normalization into \(\phi_j\) gives
\[
F_j(x)=\int_{|\xi|<\sigma_0}\phi_j(\xi)e^{i\xi\cdot x}\,d\xi,
\qquad \phi_j\in C_c^\infty(B(0,\sigma_0)).
\]
Let
\[
C_\phi=\max_j\|\phi_j\|_1<\infty.
\]
For even \(N\geq2\), let \(m=N/2\), and define the polynomial of total degree
at most \(m\)
\[
P_{N,j}(t)=
\int\phi_j(\xi)\sum_{k=0}^{m}\frac{(iN\xi\cdot t)^k}{k!}\,d\xi.
\]
Its coefficients are real: they are the Taylor coefficients of the real
function \(t\mapsto F_j(Nt)\) at zero.

On \(|t|\leq T=2D\), the exponential Taylor remainder gives
\[
|P_{N,j}(t)-F_j(Nt)|
\leq C_\phi e^{\sigma_0NT}
\frac{(\sigma_0NT)^{m+1}}{(m+1)!}.
\]
Here \(\sigma_0T=1/(32e)\). Using \(n!\geq(n/e)^n\) and
\(m+1>N/2\), we obtain, for every even \(N\geq2\),
\begin{equation}\label{eq:taylor}
\sup_{|t|\leq2D}|P_{N,j}(t)-F_j(Nt)|
\leq C_\phi e^{-N}.
\end{equation}
Indeed \(2e\sigma_0T=1/16\), and
\(\log4-1/(32e)>1\).

Define
\begin{equation}\label{eq:homogenisation}
R_{N,j}(z)=\ell(z)^N
P_{N,j}\left(\frac{a_0U^*z}{\ell(z)}\right).
\end{equation}
Because \(\deg P_{N,j}\leq N/2\), this rational expression extends
across \(\ell(z)=0\) as a real homogeneous polynomial of degree \(N\).
Since \(N\) is even, \(R_{N,j}\) is even. At \(z=v/a_0\), its
value is \(F_j(0)\), which by \eqref{eq:selected} is within \(\tau<1/4\)
of one. Thus \(R_{N,j}\) is nonzero and has degree exactly \(N\).
For the index selected in \eqref{eq:selected}, define
\(R_{N,v}=R_{N,j}\).

\subsection{The estimate near the supporting hyperplane}

For \(z\in\partial K_\nu\), set \(b=a_0U^*z\). Then \(|b|\leq D\).
If \(0\leq a=\ell(z)\leq1/2\), expansion of
\eqref{eq:homogenisation} gives
\[
|R_{N,j}(z)|
\leq C_\phi\,2^{-(N-m)}
\sum_{k=0}^{m}\frac{(N\sigma_0D)^k}{k!}
\leq C_\phi\,2^{-N/2}e^{N\sigma_0D}
\leq C_\phi e^{-N/4}.
\]
By evenness, precisely the same estimate holds when
\(-1/2\leq a\leq0\). Thus
\begin{equation}\label{eq:outside}
|R_{N,j}(z)|\leq C_\phi e^{-N/4}
\qquad\text{when }|\ell(z)|\leq1/2.
\end{equation}
Evenness reduces the region \(\ell(z)\leq-1/2\) to the positive chart.

The function
\[
h(u)=\ell(u)/\nu(u),\qquad u\in\Sph^s,
\]
is \(2D\)-Lipschitz. To see this, split the difference of two quotients
and use \(|\ell(w)|\leq\nu(w)\), \(\Lip(\nu)\leq\beta\), and
\(\nu\geq\alpha\) on the sphere. Since \(h(v)=1\) and \(h(-v)=-1\),
the condition \(|\ell(x_\nu(u))|\leq1/2\) implies
\begin{equation}\label{eq:separation}
\rho(u,v)\geq\frac1{4D}.
\end{equation}
Therefore, the Gaussian target is exponentially small in \(N^2\) on the
same region. The angular error weight is at most
\((1+\sqrt2N/A)^L\); hence \eqref{eq:outside} and
\eqref{eq:separation} make its entire weighted error tend uniformly to zero.

\subsection{The estimate near the center}

Assume now \(a\geq1/2\) and let \(x=Nt\). For \(|x|\leq M\), let
\(q=f_N(x)\), so \(0\leq q\leq CM\). From
\(\log(1+t)\geq t-t^2/2\) and \(\log(1+t)\leq t\), \(t\geq0\), we get
\[
1\leq
\left(1+\frac qN\right)^{-N}e^q
\leq \exp\left(\frac{q^2}{2N}\right).
\]
Let this ratio be \(r_N(x)\). By \eqref{eq:selected},
\[
w_A(x)|e^{-f_N(x)}F_j(x)|\leq C_L+\tau.
\]
Thus
\begin{align*}
w_A(x)|a^NF_j(x)-G_A(x)|
&\leq \tau+(r_N(x)-1)(C_L+\tau)\\
&\leq
\tau+\left(e^{C^2M^2/(2N)}-1\right)(C_L+\tau).
\end{align*}
This bound is uniform in \(\nu,v,N,j\) once \(M\) is fixed.

The Taylor error \eqref{eq:taylor}, even after multiplication by the
angular weight, tends uniformly to zero. Also
\eqref{eq:localangle} makes the angular Gaussian converge uniformly to
\(G_A(x)\) on \(|x|\leq M\). For example, the derivative of
\(r\mapsto e^{-r^2/A^2}\) has magnitude at most \(\sqrt{2/e}/A\).
Using \eqref{eq:anglecomparison}, the weighted error on this region is
therefore bounded by
\begin{equation}\label{eq:centralerror}
B_L\tau+o(1),
\end{equation}
where the \(o(1)\) is uniform over all norms, centers, and selected functions.

\subsection{The estimate away from the center}

If \(a\geq1/2\) and \(|x|=|Nt|>M\), then \(a^N\leq1\), so
\eqref{eq:taylor} gives
\[
|R_{N,j}(z)|\leq |F_j(x)|+C_\phi e^{-N}.
\]
By \eqref{eq:anglecomparison}, the Gaussian target is at most
\[
\exp\left(-\frac{|x|^2}{4D^2A^2}\right),
\]
and the angular weight is at most \(B_Lw_A(x)\).
The finite family of Schwartz functions therefore gives
\[
\lim_{M\to\infty}
\max_j\sup_{|x|>M}
w_A(x)\left[
|F_j(x)|+
\exp\left(-\frac{|x|^2}{4D^2A^2}\right)\right]=0.
\]
The Taylor remainder, multiplied by the maximal angular weight
\((1+\sqrt2N/A)^L\), also tends uniformly to zero.

Choose \(M\geq1\), after the fixed finite family, so that the last supremum
multiplied by \(B_L\) is less than \(\eps/4\). Then choose \(N_0\)
so large that the weighted error on \(|\ell(z)|\leq1/2\) is less than
\(\eps/2\), the remainder in \eqref{eq:centralerror} is less than
\(\eps/4\), and the weighted Taylor remainder on the tail is less than
\(\eps/4\). The central error is then less than \(3\eps/8\), and
the tail error is less than \(\eps/2\). All three regions therefore
satisfy the required strict error bound \(<\eps\).
Points with \(\ell(z)\leq-1/2\) are reduced to the positive
chart by evenness. All choices are uniform in the norm and center,
so Theorem~\ref{thm:kernel} follows.
\end{proof}

\section{Completion of the approximation theorem}\label{sec:completion}

The Gaussian integral on the sphere is independent of its center. We
approximate this integral by a positive finite sum and replace each
Gaussian by the corresponding homogeneous polynomial. The weighted
error estimate in Theorem~\ref{thm:kernel} controls the resulting sum.

\subsection{Spherical quadrature}

For \(\lambda>0\), define
\[
G_\lambda(u,v)=e^{-\lambda^2\rho(u,v)^2},
\qquad
T_{\lambda,L}(u,v)=(1+\lambda\rho(u,v))^{-L}.
\]
Let \(d\omega\) be surface measure on \(\Sph^s\).

\begin{lemma}[Positive spherical quadrature]\label{lem:quadrature}
If \(L>s\) and \(\lambda\geq1\), the rotation-invariant quantities
\[
I_\lambda=\int_{\Sph^s}G_\lambda(u,v)\,d\omega(v),
\qquad
J_{\lambda,L}=\int_{\Sph^s}T_{\lambda,L}(u,v)\,d\omega(v)
\]
are independent of \(u\), and satisfy
\[
0<c_s\lambda^{-s}\leq I_\lambda\leq C_s\lambda^{-s},
\qquad
J_{\lambda,L}\leq C_{s,L}\lambda^{-s},
\]
where the positive constants depend only on their subscripts.
For every fixed \(\lambda\) and \(\zeta>0\), there are finitely many
centres \(v_i\) and positive weights \(w_i\) such that, uniformly in \(u\),
\[
\left|\sum_iw_iG_\lambda(u,v_i)-I_\lambda\right|<\zeta I_\lambda,
\qquad
\sum_iw_iT_{\lambda,L}(u,v_i)\leq2J_{\lambda,L}.
\]
\end{lemma}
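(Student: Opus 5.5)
The lemma splits into two parts: the integral bounds for \(I_\lambda\) and \(J_{\lambda,L}\), and the construction of a positive quadrature. Both kernels are functions of \(|u\cdot v|\), since \(\rho(u,v)^2=2-2|u\cdot v|\). Hence the orthogonal group acts transitively preserving \(d\omega\) and the kernels, and \(I_\lambda,J_{\lambda,L}\) do not depend on \(u\).

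For the size bounds, I will fix \(u\) and use the antipodal symmetry to write each integral as twice the integral over the hemisphere \(\{v\cdot u\geq0\}\). On that hemisphere \(\rho(u,v)=|u-v|\). Parametrize it by \(y=P_{u^\perp}v\) in the unit ball of \(\R^s\). There the surface measure is \((1-|y|^2)^{-1/2}dy\geq dy\), and \(|y|\leq|u-v|\leq\sqrt2|y|\).
\begin{itemize}
\item The upper bounds come from the region \(|y|\leq 1/\sqrt2\), where the density is at most \(\sqrt2\). This gives at most \(\sqrt2\int_{\R^s}e^{-\lambda^2|y|^2}dy=C\lambda^{-s}\), and similarly \(\int_{\R^s}(1+\lambda|y|)^{-L}dy=C_{s,L}\lambda^{-s}\), which is finite because \(L>s\).
\item The complementary region \(|y|>1/\sqrt2\) has bounded measure. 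There \(\rho\geq 1/\sqrt2\), so its contribution is at most \(Ce^{-\lambda^2/2}\), respectively \(C\lambda^{-L}\). Both are \(\le C\lambda^{-s}\) because \(\lambda\geq1\) and \(L>s\).
\item The lower bound on \(I_\lambda\) comes from restricting to \(|y|\leq1/\lambda\leq1\). There \(\rho^2\leq 2/\lambda^2\), so the integrand is at least \(e^{-2}\), and the region has measure at least \(c\lambda^{-s}\).
\end{itemize}

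For the quadrature, \(\lambda\) is fixed, so no uniformity in \(\lambda\) is needed. The functions \(v\mapsto G_\lambda(u,v)\) and \(v\mapsto T_{\lambda,L}(u,v)\) are continuous in \((u,v)\) on the compact set \(\Sph^s\times\Sph^s\), because \(\rho\) is continuous. They are therefore uniformly continuous: there is \(\eta>0\) such that \(\rho\)-changes, or Euclidean changes, of size \(<\eta\) in \(v\) alter both kernels by less than \(\kappa\), uniformly in \(u\).
\begin{enumerate}
\item Partition \(\Sph^s\) into finitely many Borel cells \(E_i\) of diameter \(<\eta\) and positive measure, for instance by intersecting with a fine cube grid and discarding null cells.
\item Pick \(v_i\in E_i\) and set \(w_i=\omega(E_i)>0\).
\item Then \(|\sum_iw_iK(u,v_i)-\int K(u,v)\,d\omega(v)|\leq\kappa\,\omega(\Sph^s)\) for \(K=G_\lambda\) and for \(K=T_{\lambda,L}\), uniformly in \(u\).
\item Since \(I_\lambda>0\) and \(J_{\lambda,L}>0\) are fixed numbers, choose \(\kappa\) with \(\kappa\,\omega(\Sph^s)<\min\{\zeta I_\lambda,J_{\lambda,L}\}\).
\end{enumerate}
This yields both the relative error bound \(<\zeta I_\lambda\) and \(\sum_iw_iT_{\lambda,L}(u,v_i)\leq 2J_{\lambda,L}\).

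I see no real obstacle. The only points needing care are the comparison between \(\rho\) and the chart coordinate \(|y|\) and the measure density in the two-sided bounds; these are handled by the hemisphere reduction above.
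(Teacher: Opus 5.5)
Your proof is correct and follows the paper's argument. Both use rotation invariance, reduce the kernel integrals to a Euclidean integral over \(\R^s\) with comparable density followed by scaling by \(\lambda\), and build a positive Riemann-sum quadrature on a fine partition using uniform continuity on \(\Sph^s\times\Sph^s\). The only difference is the choice of coordinates. The paper uses the exact zonal formula with \(r=2\sin(\theta/2)\); its Jacobian factor \(r^{s-1}(1-r^2/4)^{(s-2)/2}\) is bounded above and below on \([0,\sqrt2]\), so no splitting is needed. Your orthogonal-projection chart has a density that is singular at the equator, so you need the split at \(|y|=1/\sqrt2\) and the comparison \(|y|\leq\rho\leq\sqrt2|y|\), and you handle both correctly.
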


\begin{proof}
Rotation invariance gives independence of \(u\). For a nonnegative
continuous function \(k\), spherical coordinates and the substitution
\(r=2\sin(\theta/2)\) give
\[
\int_{\Sph^s}k(\rho(u,v))\,d\omega(v)
=2|\Sph^{s-1}|\int_0^{\sqrt2}
k(r)r^{s-1}(1-r^2/4)^{(s-2)/2}\,dr.
\]
The final factor is bounded above and below by positive constants
depending only on \(s\). Substitution of \(t=\lambda r\) proves both
upper bounds, since
\(\int_0^\infty t^{s-1}(1+t)^{-L}\,dt<\infty\) when \(L>s\).
Integration over \(0\leq r\leq1/\lambda\) gives the lower bound.

For the quadrature, both kernels are jointly continuous on the compact
product of spheres. Partition the sphere into finitely many sets of
sufficiently small diameter, choose a center in each positive-area set,
and use its surface measure as the weight. Uniform continuity makes the
two Riemann-sum errors uniformly as small as required in \(u\).
Choose their absolute tolerances below \(\zeta I_\lambda\) and
\(J_{\lambda,L}\), respectively.
\end{proof}

\subsection{Approximation of the constant function}

\begin{proof}[Proof of Theorem~\ref{thm:main}: the constant function]
For \(d=1\), every body is an interval \([-a,a]\), and
\(h_{2n}(x)=(x/a)^{2n}\) equals one on its boundary.
The uniform assertion is therefore exact in dimension one.
Assume henceforth that \(d\geq2\), so \(s=d-1\geq1\).
Fix \(L>s\). Lemma~\ref{lem:quadrature} gives a constant \(B\) with
\[
\frac{2J_{\lambda,L}}{I_\lambda}\leq B
\qquad(\lambda\geq1).
\]
Given a desired error \(\eta>0\), apply Theorem~\ref{thm:kernel} with
kernel error \(\eps<\eta/(2B)\). It gives a fixed \(A\) and a uniform
degree threshold. For every sufficiently large even \(N\), define
\(\lambda=N/A\geq1\), and choose the quadrature with \(\zeta<\eta/2\).
At its finitely many centers, choose degree-\(N\) kernels \(R_{N,v_i}\).
The polynomial
\[
H_N(x)=I_\lambda^{-1}\sum_iw_iR_{N,v_i}(x)
\]
is homogeneous of degree \(N\), and for \(x=x_\nu(u)\in\partial K_\nu\),
\[
|H_N(x)-1|
\leq\zeta+
\eps\,\frac{2J_{\lambda,L}}{I_\lambda}<\eta.
\]
The constants and the threshold are uniform over the prescribed norm class,
which proves the uniform assertion in Theorem~\ref{thm:main}.

Every individual convex body belongs to such a norm-equivalence class.
For that body, choose approximants with errors tending to zero along all
even degrees, using increasing thresholds for errors \(1/j\).
At each degree, the construction requires a choice of polynomial at
only finitely many centers.
\end{proof}

\subsection{Uniformity over convex bodies}

\begin{proof}[Proof of Corollary~\ref{cor:allbodies}]
Choose \(e_1,\ldots,e_d\in K\) maximizing
\(|\det(e_1,\ldots,e_d)|\), and let \(E\) be the matrix with these
columns. Compactness gives a maximizer, and the maximum is positive
because \(K\) has nonempty interior. Replacing column \(i\) by
\(x\in K\) shows that \(|(E^{-1}x)_i|\leq1\). Symmetry and convexity
therefore imply
\[
\{y:\textstyle\sum_{i=1}^d|y_i|\leq1\}
\subset E^{-1}K\subset[-1,1]^d.
\]
The norm \(\widetilde\nu(y)=\nu_K(Ey)\) consequently satisfies
\[
d^{-1/2}|y|\leq\widetilde\nu(y)\leq d^{1/2}|y|.
\]
Composition with \(E\) is a bijection on \(\cH_{2n}^d\) and preserves
the corresponding boundary approximation errors. The uniform assertion
of Theorem~\ref{thm:main}, with \(\alpha=d^{-1/2}\) and
\(\beta=d^{1/2}\), proves the result.
\end{proof}

\subsection{Approximation of continuous functions}

Approximation of the constant function allows lower-degree homogeneous
terms to be raised to a common degree. Polynomial approximation and
symmetry then give all even functions. Multiplication by coordinate
functions gives the odd case.

\begin{proof}[Completion of Theorem~\ref{thm:main}]
First suppose \(f\in C(\partial K)\) is even.
Ordinary polynomial approximation on the compact set \(\partial K\),
followed by symmetrization, gives an even polynomial
\[
p=\sum_{j=0}^m p_{2j},
\qquad p_{2j}\in\cH_{2j}^d,
\]
arbitrarily close to \(f\).
Let \(h_{2n}\to1\) be the sequence just constructed.
For \(n>m\), the polynomial
\[
P_{2n}^{(p)}=\sum_{j=0}^m p_{2j}h_{2(n-j)}
\]
is homogeneous of degree \(2n\) and converges uniformly to \(p\) as
\(n\to\infty\). Choosing increasingly accurate \(p\)'s and then increasing
thresholds gives approximants to \(f\) along every sufficiently large
even degree.

If \(f\) is odd, define
\[
g_j(x)=\frac{x_jf(x)}{|x|^2},\qquad 1\leq j\leq d,
\quad x\in\partial K.
\]
These are continuous and even, because \(0\notin\partial K\), and
\[
f(x)=\sum_{j=1}^d x_jg_j(x).
\]
Approximate each \(g_j\) by homogeneous polynomials of degree \(2n\) and
multiply by \(x_j\). Their sum is homogeneous of degree \(2n+1\) and
converges uniformly to \(f\).

Finally, decompose an arbitrary \(f\) into its even and odd parts.
For \(n=2m\), use the even approximation in degree \(2m\) and the odd
approximation in degree \(2m+1\). For \(n=2m+1\), use the odd approximation
in degree \(2m+1\) and the even approximation in degree \(2m+2\).
Both sums converge uniformly to \(f\).
\end{proof}




\noindent\textbf{AI declaration.} The author formulated the research questions and mathematical ideas and directed the investigation. During the development of the proofs, the author interacted with and guided ChatGPT 6, which suggested proof strategies and intermediate arguments. The author critically evaluated these suggestions, developed and verified the proofs, and wrote the manuscript. ChatGPT 6, accessed through Codex, also assisted with the formalisation in Lean. The author takes full responsibility for the mathematical results, proofs, and final manuscript.

\end{document}